\documentclass{amsart}

\usepackage{amssymb,latexsym,amscd,amsthm,amssymb, amscd, amsfonts,enumerate,array,bbm,bm}

%\usepackage[cmtip,arrow]{xy}
%\usepackage{pb-diagram,lamsarrow,pb-lams}
%\usepackage{hyperref,showtags,showlabels}
%\usepackage{latexsym}
%\usepackage[sorted,compressed-cites,sorted-cites,initials,msc-links]{amsrefs}

%\numberwithin{equation}{section}

\newtheorem{theorem}{Theorem}
%[section]
\newtheorem{proposition}[theorem]{Proposition}
\newtheorem{conjecture}[theorem]{Conjecture}

\newtheorem{lemma}[theorem]{Lemma}

\newtheorem{maintheorem}[theorem]{Main Theorem}

\theoremstyle{definition}

\newtheorem{example}[theorem]{Example}

\def\ZZ{\mathbb{Z}}
\def\QQ{\mathbb{Q}}

\addtolength{\evensidemargin}{-1in}
\addtolength{\oddsidemargin}{-1in}
\addtolength{\textwidth}{4cm}
\addtolength{\textheight}{2cm}

\addtolength{\topmargin}{-0.7cm}

%To see all the comments and corrections, uncomment the following:

%\newcommand{\comment}[1]{{\small ({\bfseries Comment:} #1)}}
%You will see the deleted text in boldface between double slashes, inserted
%text typed slanted, in double square brackers, comments in small type.

%To see the new version + comments, uncomment the following
%\newcommand{\toinsert}[1]{#1}
%\newcommand{\todelete}[1]{}
%\newcommand{\comment}[1]{{\small (Comment: #1)}}

%To see the original version (with technical and TeX corrections only), uncomment
%\newcommand{\toinsert}[1]{}
%\newcommand{\todelete}[1]{#1}
%\newcommand{\comment}[1]{}

%\addtocounter{section}{-1}

%\addtocounter{page}{+3}

\begin{document}

\date{October 31, 2010}

\thanks{The authors were
supported in part
by the NSF grant DMS \#0800247}

\title[A short proof of Kontsevich cluster conjecture]
{A short proof of Kontsevich cluster conjecture}

%    Information for first author
\author{Arkady Berenstein}
\address{\noindent Department of Mathematics, University of Oregon,
Eugene, OR 97403, USA} \email{arkadiy@math.uoregon.edu}

%    Information for second author
\author{Vladimir Retakh}
\address{Department of Mathematics, Rutgers University}
\email{vretakh@math.rutgers.edu}

\maketitle

%\tableofcontents

%\section{introduction}

The aim of this note is to give an elementary proof of the following Kontsevich conjecture.

Recall that the {\it Kontsevich map} $K_r$, $r\in \ZZ_{>0}$ is the following (birational) automorphism of 
a noncommutative plane:
$$K_r:(x,y)\mapsto (xyx^{-1},(1+y^r)x^{-1}) \ ,$$
%where $H(y)=y^r+1+\sum_{1\le k\le r/2} c_k (y^k+y^{r-k})$ is a monic palindromic polynomial of degree $r$. 

%M. Kontsevich conjectured that 

\begin{conjecture} 
\label{conj:kontsevich} (M.~Kontsevich) For any $r_1,r_2\in \ZZ_{>0}$ all iterations $\underbrace{\cdots K_{r_1}K_{r_2}K_{r_1}}_k(x,y)$, $k\ge 1$ are given by noncommutative Laurent polynomials in $x$ and $y$. 

\end{conjecture}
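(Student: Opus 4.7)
The idea is to reduce the Kontsevich iteration to a three-term recurrence in a single sequence $(x_n)$, then prove Laurentness by strong induction, with the key step being a noncommutative factorization of $1+x_n^{r_n}$ that cancels hidden denominators.

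\textbf{Step 1 (Reduction to a single recurrence).} Write $(x_n,y_n)$ for the state after $n$ alternating Kontsevich iterates, with $(x_0,y_0)=(x,y)$. From $x_{n+1}=x_n y_n x_n^{-1}$ one reads off $y_{n-1}=x_{n-1}^{-1}x_n x_{n-1}$, and substituting into $y_n=(1+y_{n-1}^{r_n})x_{n-1}^{-1}$ together with the conjugation identity $(uau^{-1})^{r}=u a^{r} u^{-1}$ yields
\[
y_n=x_{n-1}^{-1}(1+x_n^{r_n}),\qquad x_{n+1}=x_n\, x_{n-1}^{-1}(1+x_n^{r_n})\,x_n^{-1}.
\]
It thus suffices to prove by induction that each $x_n$ is a noncommutative Laurent polynomial in $x,y$; Laurentness of $y_n$ will then follow from $y_n=x_{n-1}^{-1}(1+x_n^{r_n})$ by the same cancellation argument as for $x_{n+1}$.

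\textbf{Step 2 (Strong induction on $n$).} The base cases $x_0=x$, $x_1=xyx^{-1}$, and $x_2=xyx^{-1}(1+y^{r_1})y^{-1}x^{-1}$ are immediate. In the inductive step the recurrence features $x_{n-1}^{-1}$ and $x_n^{-1}$; since $x_n$ is generically a sum of noncommutative monomials, $x_n^{-1}$ is not Laurent. Unwinding via $x_n=x_{n-1}y_{n-1}x_{n-1}^{-1}$ and $y_{n-1}^{-1}=x_{n-2}(1+y_{n-2}^{r_{n-1}})^{-1}$, the non-Laurent factors in $x_n^{-1}$ are precisely the inverses $(1+y_k^{r_{k+1}})^{-1}$ for $k\le n-2$. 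Laurentness of $x_{n+1}$ therefore rests on a noncommutative factorization of $1+x_n^{r_n}$ that exhibits these $(1+y_k^{r_{k+1}})$ factors in the correct positions, cancelling the inverses.

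\textbf{Main obstacle.} The heart of the proof is identifying and verifying this noncommutative factorization. A small-case computation ($r_1=r_2=1$, $n=2$) already illustrates the pattern: a brief direct calculation yields $y_3=x(x+1)y^{-1}x^{-1}$, and the underlying free-algebra identity is $xy+x+1+y=(x+1)(1+y)$, which supplies exactly the factor of $(1+y)$ needed to cancel the $(1+y)^{-1}$ hidden in $x_2^{-1}$. Extending this pattern to arbitrary $n$ and $r_n$ requires careful control of the noncommutative order of multiplications. I expect the short proof proceeds either by finding a closed-form telescoping factorization of $1+x_n^{r_n}$ via a suitable substitution, or by a refined inductive hypothesis recording the precise placement of the $(1+y_k^{r_{k+1}})$ factors alongside Laurentness of $x_n$.
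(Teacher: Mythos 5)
Your proposal is an outline, not a proof: the entire content of the theorem is concentrated in the step you label ``Main obstacle,'' and there you explicitly defer it (``I expect the short proof proceeds either by\dots or by\dots''). The reduction in Step 1 is correct ($y_n=x_{n-1}^{-1}(1+x_n^{r_n})$, $x_{n+1}=x_nx_{n-1}^{-1}(1+x_n^{r_n})x_n^{-1}$), and your $r_1=r_2=1$ computation is right, but the general noncommutative factorization that cancels the denominators is exactly what must be proved, and it is absent. Moreover, the bookkeeping you propose --- unwinding $x_n^{-1}$ all the way back to $n=0$ and claiming its non-Laurent content is ``precisely the inverses $(1+y_k^{r_{k+1}})^{-1}$ for $k\le n-2$'' --- is itself an unproved assertion, and organizing the induction this way forces you to control denominators that compound with $n$; there is no indication of how the inductive hypothesis would be strong enough to carry this out.

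For comparison, the paper avoids this compounding entirely. It first observes that $z=[x,y]$ is preserved by every iteration and that $x_{k+1}=zy_k$, so everything reduces to the single recursion $y_{k+1}zy_{k-1}=1+y_k^{r_k}$ together with the commutation relation $y_{k+1}zy_k=y_ky_{k+1}$. The Laurent property then follows from a \emph{local} statement: the algebra $\mathcal A_k$ generated by any four consecutive terms $y_k,\dots,y_{k+3}$ and $z^{\pm1}$ is independent of $k$. This needs only one identity, namely that $y_{k+4}z$ equals $zy_k(y_{k+3}z)^{r_{k+1}}-\sum_{j=0}^{r_{k+1}-1}(zy_{k+1})^jz(y_{k+2}z)^{r_k-1}(y_{k+3}z)^j$ --- a genuine polynomial (no inverses) in the window --- and the cancellation driving it is the telescoping identity $y_1^m(y_3z)^m=1+\sum_{j=0}^{m-1}y_1^j(y_2z)^{r_2}(y_3z)^j$. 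That telescoping lemma is the ``noncommutative factorization'' you were looking for, but it lives in the $y$-variables and only ever involves a one-step extension of the window, which is why the induction closes. If you want to complete your approach, you should translate your recurrence into these variables (or find and prove the analogous closed-form identity for $1+x_n^{r_n}$ directly); as written, the proposal has a genuine gap at its central step.
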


%This conjecture was generalized in \cite{KeDif4} as follows.

%\begin{conjecture}
%\label{conj:kontsevich2}
%For any $r_1,r_2\in \ZZ_{>0}$ all iterations $\underbrace{\cdots K_{r_1}K_{r_2}K_{r_1}}_k(x,y)$, $k\ge 1$ are given by noncommutative Laurent polynomials in $x$ and $y$
%with non-negative integer coefficients.
%\end{conjecture} 

The Kontsevich conjecture was first proved for $r_1=r_2=2$ by A.~Usnich in \cite{Usnich} and was later settled by A.~Usnich in \cite{Usnich1} in greater generality when $r_1=r_2=r$  (with $1+y^r$ replaced by any monic palindromic polynomial $H(y)$) by means of derived 
categories.
Independently, Conjecture \ref{conj:kontsevich} was verified for $(r_1,r_2)\in \{(2,2),(4,1),(1,4)\}$ in \cite{KeDiF4} along with the positivity conjecture: 
for $(r_1,r_2)\in \{(2,2),(4,1),(1,4)\}$ all noncommutative Laurent polynomials in question have nonnegative integer coefficients. 

%It turns out that our noncommutative clusters on the cylinder  settle the {\it positive} Kontsevich conjecture due to the following result from our forthcoming paper \cite{BR2}.

Our goal is to give a short proof of Conjecture \ref{conj:kontsevich}.

\begin{theorem} 
\label{th:kontsevich} For any $r_1,r_2\in \ZZ_{>0}$ all iterations $\underbrace{\cdots K_{r_1}K_{r_2}K_{r_1}}_k(x,y)$, $k\ge 1$ are given by noncommutative Laurent polynomials in $x$ and $y$. 

\end{theorem}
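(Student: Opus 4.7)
The plan is first to reduce the two-variable iteration to a scalar recurrence, and then to prove the resulting Laurentness by induction using a single commutation identity. Writing $(x_k,y_k)$ for the $k$-th iterate (so that $(x_{k+1},y_{k+1})=K_{s_k}(x_k,y_k)$ with $s_k\in\{r_1,r_2\}$ alternating), a direct substitution yields, for $k\ge 1$,
\begin{equation*}
y_k \;=\; x_{k-1}^{-1}(1+x_k^{s_{k-1}}),\qquad x_{k+1}\,x_k \;=\; x_k\, x_{k-1}^{-1}(1+x_k^{s_{k-1}}).
\end{equation*}
So it suffices to show that each $x_k$ is a noncommutative Laurent polynomial in $x$ and $y$. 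A preliminary conjugation by $x$ (setting $W_k := x^{-1}x_k x$) normalizes the initial data from $(x_0,x_1)=(x,xyx^{-1})$ to $(x,y)$ while preserving the recurrence, so without loss of generality we may assume $x_0=x$, $x_1=y$.

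Strong induction on $k$ is my attack. Writing out
\begin{equation*}
x_{k+1} \;=\; x_k x_{k-1}^{-1}x_k^{-1} \;+\; x_k x_{k-1}^{-1} x_k^{s_{k-1}-1}
\end{equation*}
makes the difficulty evident: $x_{k-1}^{-1}$ is not itself a Laurent polynomial. The engine of the argument is that $x_k$ commutes with every polynomial in $x_k$, giving the identity $(1+x_k^{s})\,x_k^{-1}\,(1+x_k^{s})^{-1}=x_k^{-1}$, which cancels the apparent denominators $(1+x_k^s)^{-1}$ that arise when one iteratively re-expresses $x_{k-1}^{-1}$ via the recurrence at earlier indices. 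For example, at the first nontrivial step $x_2 x_1^{-1}x_2^{-1} = yx^{-1}(1+y^{r_1})y^{-1}(1+y^{r_1})^{-1}xy^{-1} = yx^{-1}y^{-1}xy^{-1}$, visibly Laurent despite both $x_2^{-1}$ and $(1+y^{r_1})^{-1}$ entering individually.

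The main obstacle is to track these cancellations cleanly through arbitrarily many iterations, since noncommutativity forbids the free reordering of factors that powers the classical Caterpillar Lemma of Fomin--Zelevinsky. I expect the cleanest route is to strengthen the inductive hypothesis by exhibiting each $x_k$ explicitly as a sum of monomials indexed by a combinatorial model built from the alternating sequence $s_0,s_1,\ldots$, so that the inductive step reduces to a bounded local verification and the Laurentness is manifest term by term; such a presentation would also bear on the positivity conjecture for the coefficients.
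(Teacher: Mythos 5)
Your reduction to the three-term recurrence $x_{k+1}x_k = x_k x_{k-1}^{-1}(1+x_k^{s_{k-1}})$ and the normalization of initial data are correct, and the cancellation you exhibit at the first step is real. But there are two genuine gaps. First, the claim that it ``suffices to show each $x_k$ is Laurent'' is not justified by the formula $y_k = x_{k-1}^{-1}(1+x_k^{s_{k-1}})$: once $x_{k-1}$ is a nontrivial sum of monomials, $x_{k-1}^{-1}$ is not a Laurent polynomial, so Laurentness of the $x_k$ does not transfer to the $y_k$ by that route. It can be rescued, but only via an observation you never make: the commutator $z=[x_k,y_k]=x_ky_kx_k^{-1}y_k^{-1}$ is invariant under every $K_r$, whence $x_{k+1}=zy_k$ with $z$ a fixed Laurent monomial in $x,y$; this is what ties the two coordinates together and is the starting point of the paper's argument. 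Second, and more fundamentally, the inductive step is never carried out: your final paragraph conjectures that a combinatorial model indexed by $s_0,s_1,\dots$ would make the cancellations trackable through arbitrarily many iterations, but no such model is constructed and no strengthened inductive hypothesis is even formulated. The entire content of the theorem lives in exactly that step, so what you have is a plan, not a proof.

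For comparison, the paper does not attempt a direct induction on the defining recurrence. Using the invariant $z$, it derives the commutation relation $y_{k+1}zy_k=y_ky_{k+1}$ together with the exchange relations $y_{k+1}zy_{k-1}=1+y_k^{r_k}$ and $y_{k-1}y_{k+1}z=1+(y_kz)^{r_k}$, and then proves the closed identity
$$y_{k+4}z=zy_k(y_{k+3}z)^{r_{k+1}}-\sum_{j=0}^{r_{k+1}-1}(zy_{k+1})^j z(y_{k+2}z)^{r_k-1}(y_{k+3}z)^j,$$
which exhibits $y_{k+4}$ as an honest polynomial in $y_k,\dots,y_{k+3},z^{\pm1}$: all inverses cancel in one stroke thanks to a telescoping identity for $y_1^m(y_3z)^m$. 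Combined with the reflection anti-automorphism $y_k\mapsto y_{3-k}$, this shows that the algebra generated by any four consecutive variables together with $z^{\pm1}$ is independent of the window, and Laurentness of every $y_k$ (hence every iterate) in $y_1,y_2$ follows immediately. If you want to salvage your approach, look for the analogue of this four-step identity rather than a monomial-by-monomial combinatorial model; the latter would in effect be the (still harder) positivity conjecture.
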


%\section{Main result}

To present our proof of Theorem \ref{th:kontsevich}, we need some notation. Denote $$(x_k,y_k):=\underbrace{\cdots K_{r_1}K_{r_2}K_{r_1}}_k(x,y) $$
and denote $z:=[x,y]=xyx^{-1}y^{-1}$.
Then it is easy to see by induction that $[x_k,y_k]=[x,y]=z$ for all $k$. This taken together with the recursion $x_{k+1}=x_ky_kx_k^{-1}$ and $y_{k+1}=(1+y_k^{r_k})x_k^{-1}$, where 
\begin{equation}
\label{eq:r_k}r_k=
\begin{cases} 
r_1 &\text{if $k$ is odd} \\
r_2 &\text{if $k$ is even} \\
\end{cases}
\end{equation}
gives the following three recursions (they first appeared in \cite[Section 2.2]{KeDiF4})
$$x_{k+1}=zy_k,~y_{k+1}zy_{k-1}=1+y_k^{r_k}, ~y_{k+1}z y_k=y_ky_{k+1} \ .$$

Let ${\mathcal F}_2=\QQ\langle y_1^{\pm 1},y_2^{\pm 1}\rangle$ be the group algebra of the free group in $2$ generators. It was proved by A.I.~Malcev  (see e.g., \cite[Section 8.7]{Cohn}) that ${\mathcal F}_2$ is a 
{\it divisible algebra}, i.e., it embeds in a division ring (we denote the smallest one by $Frac({\mathcal F}_2)$). 

Define elements $y_k\in Frac({\mathcal F}_2)$, $k\in \ZZ\setminus \{1,2\}$
recursively by:  
\begin{equation}
\label{eq:cluster recursion} y_{k+1}zy_{k-1}=1+y_k^{r_k} \ ,
\end{equation} 
where $z:=[y_2^{-1},y_1]=y_2^{-1}y_1y_2y_1^{-1}$.  

Note that $y_0,y_3\in {\mathcal F}$ and let ${\mathcal A}={\mathcal A}(r_1,r_2)$ be the subalgebra of ${\mathcal F}$ generated by $y_0,y_1,y_2,y_3,z,z^{-1}$. 
We will  refer to ${\mathcal A}$ as a {\it (purely) noncommutative cluster algebra} of type $(r_1,r_2)$.

\begin{lemma} The elements $y_k\in Frac({\mathcal F}_2)$ satisfy for all $k\in \ZZ$:
\begin{equation}
\label{eq:z-commutation}
y_{k+1}z y_k=y_ky_{k+1}
\end{equation}
\end{lemma}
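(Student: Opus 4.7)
The plan is to prove \eqref{eq:z-commutation} by induction on $k$ in both directions, starting from the base case $k = 1$. The key observation is that the identity $y_{k+1}zy_k = y_ky_{k+1}$ is equivalent to $z = y_{k+1}^{-1}y_ky_{k+1}y_k^{-1}$, and for $k = 1$ this is literally the definition $z := [y_2^{-1}, y_1]$, so the base case is free.

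For the forward step, I would rewrite the inductive hypothesis in the convenient form $y_{k+1}y_k^{-1}z^{-1} = y_k^{-1}y_{k+1}$, and then use the recursion \eqref{eq:cluster recursion} at index $k+1$ to express
\[
y_{k+2} = (1 + y_{k+1}^{r_{k+1}})y_k^{-1}z^{-1}.
\]
Substituting this expression into both $y_{k+2}zy_{k+1}$ and $y_{k+1}y_{k+2}$ and applying the rearranged hypothesis (once on each summand, after commuting the scalar power $y_{k+1}^{r_{k+1}}$ past itself) collapses both sides to $(1 + y_{k+1}^{r_{k+1}})y_k^{-1}y_{k+1}$.

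For the backward step, I would invert the recursion to $y_{k-1} = z^{-1}y_{k+1}^{-1}(1 + y_k^{r_k})$ and rewrite the hypothesis as $z^{-1}y_{k+1}^{-1} = y_ky_{k+1}^{-1}y_k^{-1}$. Computing $y_kzy_{k-1}$ and $y_{k-1}y_k$ directly and applying the trivial identity $y_k^{-1}(1 + y_k^{r_k})y_k = 1 + y_k^{r_k}$ shows both expressions equal $y_ky_{k+1}^{-1}(1 + y_k^{r_k})$.

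The main obstacle is purely notational: the argument is a formal calculation in a noncommutative division ring, so the ordering of every factor must be tracked with care. There is no conceptual difficulty once the base case is recognized as tautological and the inductive hypothesis is rearranged into the right shape to cancel the central factors $z^{\pm 1}$ in the substitutions.
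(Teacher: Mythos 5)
Your proof is correct and follows essentially the same route as the paper: base case $k=1$ from the definition of $z$, then forward induction using the recursion together with the hypothesis rearranged as $y_{k+1}y_k^{-1}z^{-1}=y_k^{-1}y_{k+1}$, which is exactly the paper's cancellation. The only difference is that you spell out the backward step explicitly, whereas the paper dismisses the case $k\le 0$ with ``also follows''; your version of that step is valid.
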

\begin{proof} Indeed, the \eqref{eq:z-commutation} is obvious for $k=1$. Let us prove it for $k\ge 1$ by induction. We will use the inductive hypothesis in the form 
$y_ky_{k-1}^{-1}z^{-1}=y_{k-1}^{-1}y_k$.
Indeed, since $y_{k+1}z=(1+y_k)^{r_k}y_{k-1}^{-1}$, we obtain
$$y_{k+1}z y_k-y_ky_{k+1}=(1+y_k)^{r_k}y_{k-1}^{-1}y_k-y_k(1+y_k)^{r_k}y_{k-1}^{-1}z^{-1}$$
$$=(1+y_k)^{r_k}y_{k-1}^{-1}y_k-(1+y_k)^{r_k}y_ky_{k-1}^{-1}z^{-1}=(1+y_k)^{r_k}y_{k-1}^{-1}y_k-(1+y_k)^{r_k}y_{k-1}^{-1}y_k=0$$
by the inductive hypothesis. The relation \eqref{eq:z-commutation} for $k\le 0$ also follows.
\end{proof}

Thus, based on the above discussion, Theorem \ref{th:kontsevich} directly follows from our main result.

\begin{maintheorem} 
\label{th:yk}
Each $y_k$ belongs to ${\mathcal A}$, e.g., $y_k$ is a noncommutative Laurent polynomial in $y_1,y_2$.

\end{maintheorem}

\begin{proof} 

Denote by ${\mathcal A}_k={\mathcal A}_k(r_1,r_2)$ the subalgebra of ${\mathcal F}_2$ generated by $y_k,y_{k+1},y_{k+2},y_{k+3},z^{\pm 1}$. It suffices to prove the following result (which is a noncommutative version of \cite[Formula (4.12)]{BFZ-cluster} and \cite[Lemma 5.8]{BZ10}).

\begin{theorem} 
\label{th:AkA}
${\mathcal A}_k={\mathcal A}$ for all $k\in \ZZ$.

\end{theorem}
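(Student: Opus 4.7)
My plan is induction on $|k|$, reduced to the single step $\mathcal{A}_{k+1} = \mathcal{A}_k$, following the overall pattern of the commutative cluster-algebra analogues cited (\cite[Formula (4.12)]{BFZ-cluster}, \cite[Lemma 5.8]{BZ10}). Since the cluster recursion \eqref{eq:cluster recursion} is symmetric under index reversal (one can solve it for either endpoint of three consecutive terms), and since $\mathcal{A}_k$ and $\mathcal{A}_{k+1}$ share the generators $y_{k+1}, y_{k+2}, y_{k+3}, z^{\pm 1}$, it suffices to prove the single containment $y_{k+4} \in \mathcal{A}_k$; the companion $y_k \in \mathcal{A}_{k+1}$ then follows from the same argument with indices reversed.

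To produce $y_{k+4}$ as an element of $\mathcal{A}_k$, I would combine the recursion $y_{k+4} z y_{k+2} = 1+y_{k+3}^{r_{k+3}}$ with the Lemma's commutation $y_{k+3} z y_{k+2} = y_{k+2} y_{k+3}$; substituting $z y_{k+2} = y_{k+3}^{-1} y_{k+2} y_{k+3}$ into the recursion yields
\[
y_{k+4} = \bigl(y_{k+3}^{-1}+y_{k+3}^{r_{k+3}-1}\bigr)\, y_{k+2}^{-1}\, y_{k+3},
\]
so the task reduces to eliminating the isolated $y_{k+2}^{-1}$. For this I would invoke the shifted identity $y_{k+3} y_{k+2}^{-1} y_{k+1} = y_{k+2}^{-1} + y_{k+2}^{r_{k+2}-1}$ (obtained by the same recursion/commutation substitution one index lower) together with the direct consequence $y_{k+3} y_{k+2}^{-1} = y_{k+2}^{-1} y_{k+3} z$ of the commutation. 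Iteratively migrating the $y_{k+2}^{-1}$ through the $y_{k+3}$ factors, and trading it for $y_{k+2}^{r_{k+2}-1}$ at the cost of an ancillary $y_{k+1}^{-1}$ each time, eventually brings $y_{k+2}^{-1}$ alongside the product $(1+y_{k+1}^{r_{k+1}})$, whereupon the $k$-step consequence $y_{k+2}^{-1}(1+y_{k+1}^{r_{k+1}}) = zy_k$ of the recursion absorbs the inverse into $zy_k \in \mathcal{A}_k$.

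The main obstacle is the combinatorial bookkeeping when $r_{k+3}>1$: each migration of $y_{k+2}^{-1}$ past a $y_{k+3}$ introduces a compensating factor of $z$, and transient $y_{k+1}^{\pm 1}$ terms proliferate through the shifted identity; one must verify that when the substitutions terminate, every negative occurrence of $y_{k+1},y_{k+2},y_{k+3}$ has cancelled, leaving only positive powers of the generators and $z^{\pm 1}$. As a sanity check, in the case $r_1=r_2=1$ (where no powers need to be migrated) the entire calculation collapses to the compact closed form
\[
y_{k+4} = z\, y_k\, y_{k+3} - 1,
\]
verifiable directly using the auxiliary identity $y_{k+1} y_{k+3} = z^{-1}+y_{k+2}$.
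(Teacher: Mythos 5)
Your overall strategy coincides with the paper's: reduce to the single inclusion $y_{k+4}\in\mathcal{A}_k$, get the companion inclusion $y_k\in\mathcal{A}_{k+1}$ from an index-reversing symmetry, and eliminate the inverse of $y_{k+2}$ by trading it for $zy_k$ via $y_{k+2}^{-1}(1+y_{k+1}^{r_{k+1}})=zy_k$. The individual identities you quote are all correct (your starting expression for $y_{k+4}$ is equivalent to the paper's $y_{k+2}y_{k+4}z=1+(y_{k+3}z)^{r_{k+1}}$, and your $r_1=r_2=1$ check agrees with the specialization $y_{k+4}=zy_ky_{k+3}-1$ of the paper's closed formula). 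One caution on the symmetry: in the noncommutative setting "reversing the indices" is not a relabeling but the \emph{anti}-automorphism $\sigma$ with $\sigma(y_1)=y_2$, $\sigma(y_2)=y_1$, which sends $y_k\mapsto y_{3-k}$ and exchanges $(r_1,r_2)$; the recursion $y_{k+1}zy_{k-1}=1+y_k^{r_k}$ is not invariant under merely swapping the outer factors, so the order of multiplication must genuinely be reversed, as in Lemma \ref{le:reflection}.

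The genuine gap is the step you yourself flag as ``the main obstacle'': for general $r_1,r_2$ you never verify that the iterated migration of $y_{k+2}^{-1}$ terminates with every inverse cancelled. That verification is the entire content of the theorem, and it does not follow from the identities you list by an unstructured iteration; what is needed is a closed-form telescoping identity. In the paper this is Lemma \ref{le:reduction},
$$y_1^m(y_3z)^m=1+\sum_{k=0}^{m-1}y_1^k(y_2z)^{r_2}(y_3z)^k \ ,$$
proved by induction on $m$ from $y_1y_3z=1+(y_2z)^{r_2}$. Applied with $m=r_1$ (after peeling off one factor), it rewrites the troublesome remainder as a sum of words each consisting of a nonnegative power of $y_1$ followed by $(y_2z)^{r_2}$; commuting via $y_1^ky_2=y_2(zy_1)^{k-1}$ then exposes a leading $y_2$ that cancels the prefactor $y_2^{-1}$ exactly, producing the explicit Laurent formula of Proposition \ref{pr:polynomial}. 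Without this lemma (or an equivalent explicit formula) your argument remains a plausible plan rather than a proof; supplying it would complete your write-up along essentially the same lines as the paper.
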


\begin{proof} Since ${\mathcal A}={\mathcal A}_0$, it suffices to prove that ${\mathcal A}_k={\mathcal A}_{k+1}$
for $k\in \ZZ$, i.e., that for all $k\in \ZZ$ one has
%In fact, it will suffice to prove only one inclusion ${\mathcal A}_{k+1}\subseteq {\mathcal A}_k$, i.e., 
\begin{equation}
\label{eq:inclusionk} 
y_{k+4}\in {\mathcal A}_k,~y_k\in {\mathcal A}_{k+1}
\end{equation}
%for all $k\in \ZZ$ (the opposite inclusion $x_k\in {\mathcal A}_{k+1}$ can be proved by the identical argument). 

\begin{proposition} 
\label{pr:polynomial}
For each $n\in \ZZ$ one has: $y_{k+4}z=zy_k(y_{k+3}z)^{r_{k+1}}-\sum\limits_{j=0}^{r_{k+1}-1}(zy_{k+1})^jz(y_{k+2}z)^{r_k-1}(y_{k+3}z)^j$.
\end{proposition}

\begin{proof} For simplicity (and without loss of generality) we assume that $k=0$. We start with the following technical result.

\begin{lemma}
\label{le:reduction}
For each $m\ge 0$ we have: 
$
y_1^m(y_3z)^m=
1+\sum\limits_{k=0}^{m-1}y_1^k(y_2z)^{r_2}(y_3z)^k$.
\end{lemma}
\begin{proof}
We proceed by induction on $m$.
For $m=0$ the assertion is clear. Assume that $m>0$ and it holds for $m-1$. Let us prove it for $m$. 
Note that the  \eqref{eq:cluster recursion} and \eqref{eq:z-commutation} imply that 
\begin{equation}
\label{eq:cluster recursion2} 
y_{k-1}y_{k+1}z=1+(y_kz)^{r_k}  
\end{equation}
Indeed, using \eqref{eq:cluster recursion2}, we obtain 
$$y_1^m(y_3z)^m=y_1^{m-1}(y_1y_3z)(y_3z)^{m-1}=y_1^{m-1}(1+(y_2z)^{r_2})(y_3z)^{m-1}=y_1^{m-1}(y_2z)^{r_2}(y_3z)^{m-1}+y_1^{m-1}(y_3z)^{m-1}$$
$$=y_1^{m-1}(y_2z)^{r_2}(y_3z)^{m-1}+1+\sum_{k=0}^{m-2}y_1^k(y_2z)^{r_2}(y_3z)^k=1+\sum_{k=0}^{m-1}y_1^k(y_2z)^{r_2}(y_3z)^k \ .
$$
The lemma is proved.
\end{proof}

Furthermore, compute:
$$y_4z=y_2^{-1}((y_3z)^{r_1}+1)=y_2^{-1}(y_3z)^{r_1}+y_2^{-1}=(zy_0-y_2^{-1}(y_1)^{r_1})(y_3z)^{r_1}+y_2^{-1}$$
$$=zy_0(y_3z)^{r_1}-y_2^{-1}(y_1^{r_1-1}(y_1y_3z)(y_3z)^{r_1-1}-1)=zy_0(y_3z)^{r_1}-y_2^{-1}(y_1^{r_1-1}(1+(y_2z)^{r_2})(y_3z)^{r_1-1}-1).$$

We have: $$y_1^{r_1-1}(1+(y_2z)^{r_2})(y_3z)^{r_1-1}-1=y_1^{r_1-1}(y_2z)^{r_2}(y_3z)^{r_1-1}+y_1^{r_1-1}(y_3z)^{r_1-1}-1 \ .$$

Using Lemma \ref{le:reduction} and taking into account that $y_1^my_2=y_2(zy_1)^{m-1}$ for $m>0$,  we obtain:

$$y_1^{r_1-1}(1+(y_2z)^{r_2})(y_3z)^{r_1-1}-1= y_1^{r_1-1}(y_2z)^{r_2}(y_3z)^{r_1-1}+\sum_{k=0}^{r_1-2}y_1^k(y_2z)^{r_2}(y_3z)^k=\sum_{k=0}^{r_1-1}y_1^k(y_2z)^{r_2}(y_3z)^k$$
$$=y_2\sum_{k=0}^{r_1-1}(zy_1)^kz(y_2z)^{r_2-1}(y_3z)^k \ .$$

Therefore, $y_4z=zy_0(y_3z)^{r_1}-\sum\limits_{k=0}^{r_1-1}(zy_1)^kz(y_2z)^{r_2-1}(y_3z)^k$.
This proves Proposition \ref{pr:polynomial}.
\end{proof}

Proposition \ref{pr:polynomial} gives us the first inclusion \eqref{eq:inclusionk}. Prove second inclusion \eqref{eq:inclusionk} now.
We need the following obvious fact. Let $\sigma$ be the anti-automorphism of ${\mathcal F}_2$ given by: $\sigma(y_1)= y_2$, $\sigma(y_1)= y_2$ (so that $\sigma(z)= z$). 

\begin{lemma} \label{le:reflection}
 $\sigma(y_k)=y_{3-k}$  for $k\in \ZZ$, in particular, $\sigma({\mathcal A}_k(r_1,r_2))={\mathcal A}_{-k}(r_2,r_1)$ for $k\in \ZZ$.
\end{lemma}
This immediately implies the second inclusion \eqref{eq:inclusionk}: $y_{1-k}\in {\mathcal A}_{-k}$,  $k\in \ZZ$ and Theorem \ref{th:AkA} is proved. \end{proof}

Therefore, Theorem \ref{th:yk} is proved.
\end{proof}

And, ultimately, Theorem \ref{th:kontsevich} is proved. \endproof
\begin{example}
Let $r_1=r_2=2$. We have: $y_{k+1}zy_{k-1}=y_k^2+1,~y_{k-1}y_{k+1}z=y_kzy_kz + 1$
for all $k\in \ZZ$. This implies:
$$y_4z=y_2^{-1}(y_3 z y_3z+ 1)=(zy_0-y_2^{-1}y_1^2)y_3 (z y_3z)+y_2^{-1}$$
$$=zy_0y_3z y_3z-y_2^{-1}(y_1(y_1y_3z)y_3z-1)$$
Note that $y_1(y_1y_3z)y_3z-1=y_1(y_2zy_2z+ 1)y_3z-1=y_1y_2zy_2zy_3z+ y_1y_3z-1=y_2zy_1zy_2zy_3z+ (y_2z)^2$.
Therefore, $$y_4z=zy_0(y_3z)^2-(zy_1zy_2zy_3z+ zy_2z).$$
%Hence, 
%$$y_5=zy_1(y_4z)^2-(zy_2zy_3zy_4z+ y_3z)=zy_1(zy_0y_3z y_3z-(zy_1zy_2zy_3z+ y_2z))^2-(zy_2zy_3z(zy_0(y_3z)^2-(zy_1zy_2zy_3z+ y_2z))+ y_3z)$$
\end{example}

The noncommutative cluster algebra ${\mathcal A}={\mathcal A}(r_1,r_2)$ has a number symmetries in addition to the anti-involution $\sigma:{\mathcal A}(r_1,r_2)\widetilde \to {\mathcal A}(r_2,r_1)$ from Lemma \ref{le:reflection}: the translation  $y_k\mapsto y_{k+1}$, $k\in \ZZ$ defines an isomorphism $\tau:{\mathcal A}(r_1,r_2)\widetilde \to {\mathcal A}(r_2,r_1)$, which is an automorphism when $r_1=r_2$. 

\medskip

We conclude with a brief discussion of the presentation of ${\mathcal A}$. 

%It is easy to show that the following relations holds in ${\mathcal F}_2$ 

\begin{proposition} 
\label{pr:presentation}
The generators $y_0,y_1,y_2,y_3,z^{\pm 1}$ of  ${\mathcal A}$  satisfy (for $i=0,1,2$, $j=1,2$):
$$y_iy_{i+1}=y_{i+1}zy_i, y_{j+1}zy_{j-1}=y_j^{r_j}+1,~y_{j-1}y_{j+1}z=(y_jz)^{r_j}+1,~y_3zy_0-zy_0y_3z=y_2^{r_2-1}y_1^{r_1-1}-z(y_1z)^{r_1-1}(y_2z)^{r_2-1}$$

\end{proposition}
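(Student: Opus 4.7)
The first three families of relations are restatements of facts already in hand: $y_i y_{i+1} = y_{i+1} z y_i$ is Lemma~1 (equation \eqref{eq:z-commutation}) at index $i$, $y_{j+1} z y_{j-1} = y_j^{r_j} + 1$ is the defining recursion \eqref{eq:cluster recursion} at $k = j$, and $y_{j-1} y_{j+1} z = (y_j z)^{r_j} + 1$ is its equivalent form \eqref{eq:cluster recursion2} at $k = j$. So the only relation requiring work is the last one, a quasi-commutation between $y_3 z$ and $z y_0$.

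My plan is to multiply both sides of that relation on the left by $y_1$ and simplify each side independently using nothing beyond the listed relations; since $\mathcal{A}$ sits inside the division ring $\mathrm{Frac}(\mathcal{F}_2)$, the factor of $y_1$ can then be cancelled. On the left, applying $y_1 y_3 z = 1 + (y_2 z)^{r_2}$ (which is \eqref{eq:cluster recursion2} at $k=2$) together with $y_1 z y_0 = y_0 y_1$ (which is \eqref{eq:z-commutation} at $k=0$) collapses $y_1(y_3 z y_0 - z y_0 y_3 z)$ to the pure commutator $(y_2 z)^{r_2} y_0 - y_0 (y_2 z)^{r_2}$. Peeling off one factor of $y_2 z$ at each end using $y_2 z y_0 = 1 + y_1^{r_1}$ and $y_0 y_2 z = 1 + (y_1 z)^{r_1}$ reduces this commutator to $(y_2 z)^{r_2-1} y_1^{r_1} - (y_1 z)^{r_1}(y_2 z)^{r_2-1}$.

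To match this against $y_1$ times the stated right-hand side, I need one small auxiliary identity: $(y_2 z)^m y_1 = y_1 y_2^m$ for $m \ge 0$. It follows by a one-line induction whose inductive step is an immediate application of $y_2 z y_1 = y_1 y_2$, i.e.\ \eqref{eq:z-commutation} at $k=1$. Taking $m = r_2 - 1$ converts $(y_2 z)^{r_2-1} y_1^{r_1}$ into $y_1 y_2^{r_2-1} y_1^{r_1-1}$, while the trivial factoring $(y_1 z)^{r_1} = y_1 z \cdot (y_1 z)^{r_1-1}$ handles the other term, so the whole expression equals $y_1$ times the claimed right-hand side. The only conceptual hurdle is recognizing the sliding identity $(y_2 z)^m y_1 = y_1 y_2^m$ as the bridge between the two natural forms of the answer; everything else is a mechanical application of the explicit recursions.
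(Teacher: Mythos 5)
Your proof is correct, and it takes a genuinely different route from the paper's. The paper handles the last relation by substituting the explicit rational expressions $y_0=(1+(y_1z)^{r_1})z^{-1}y_2^{-1}$ and $y_3z=y_1^{-1}(1+(y_2z)^{r_2})$ to get the closed factorizations $y_0y_3z=h_{r_1}(y_1z)h_{r_2}(y_2z)$ and $y_3zy_0=h_{r_2}(y_2)h_{r_1}(y_1)$ with $h_r(y)=y^{-1}+y^{r-1}$, and then expands all four cross-terms of each product and matches them pairwise via the conjugation identities $y_1y_2y_1^{-1}=y_2z$ and $y_2^{-1}y_1y_2=zy_1$. You instead clear denominators at the outset: left-multiplying by $y_1$ (which is invertible already in ${\mathcal F}_2$, a group algebra, so the cancellation at the end is legitimate) reduces the claim to the commutator identity $(y_2z)^{r_2}y_0-y_0(y_2z)^{r_2}=y_1\cdot(\mathrm{RHS})$, which you settle by peeling one exchange relation \eqref{eq:cluster recursion}/\eqref{eq:cluster recursion2} off each end and invoking the sliding identity $(y_2z)^my_1=y_1y_2^m$. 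I checked each step: $y_1y_3z=1+(y_2z)^{r_2}$ and $y_1zy_0=y_0y_1$ do collapse the left side to the stated commutator; $y_2zy_0=1+y_1^{r_1}$ and $y_0y_2z=1+(y_1z)^{r_1}$ give $(y_2z)^{r_2-1}y_1^{r_1}-(y_1z)^{r_1}(y_2z)^{r_2-1}$; and the sliding identity (a correct induction from $y_2zy_1=y_1y_2$) matches this with $y_1$ times the right-hand side. Your version is shorter, avoids the negative powers hidden in $h_r$, and isolates the single genuinely noncommutative ingredient cleanly; what it gives up is the paper's intermediate byproduct, namely the factorizations of $y_0y_3z$ and $y_3zy_0$ as products of the palindromic elements $h_r$, which have some independent interest for the presentation of ${\mathcal A}$.
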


\begin{proof} Only the last relation needs to be proved (the first three relations are \eqref{eq:z-commutation}, \eqref{eq:cluster recursion}, and \eqref{eq:cluster recursion2} respectively). Indeed, using the available relations in ${\mathcal F}_2$, we obtain: 
$$y_0y_3z=((1+(y_1z)^{r_1})z^{-1}y_2^{-1})(y_1^{-1}(1+(y_2z)^{r_2}))=(1+(y_1z)^{r_1})z^{-1}y_1^{-1}z^{-1}y_2^{-1}(1+(y_2z)^{r_2})=h_{r_1}(y_1z)h_{r_2}(y_2z)\ ,$$
where $h_r(y)=y^{-1}+y^{r-1}$. Similarly, 
$$y_3zy_0=((1+y_2^{r_2})y_1^{-1})(z^{-1}y_2^{-1}(1+y_1^{r_1}))=(1+y_2^{r_2})y_2^{-1}y_1^{-1}(1+y_1^{r_1})=h_{r_2}(y_2)h_{r_1}(y_1)$$
Taking into account that $y_1y_2y_1^{-1}=y_2z$ and $y_2^{-1}y_1y_2=zy_1$, we obtain:
$$y_3zy_0=y_2^{r_2-1}y_1^{r_1-1}+h_{r_2}(y_2)y_1^{-1}+y_2^{-1}y_1^{r_1-1}+y_2^{-1}y_1^{-1}=y_2^{r_2-1}y_1^{r_1-1}+(zy_1)^{r_1-1}y_2^{-1}+y_1^{-1}h_{r_2}(y_2z)+y_1^{-1}z^{-1}y_2^{-1}$$ 
$$=y_2^{r_2-1}y_1^{r_1-1}+z(y_1z)^{r_1-1}(y_2z)^{-1}+z(y_1z)^{-1}h_{r_2}(y_2z)+z(y_1z)^{-1}(y_2z)^{-1}=y_2^{r_2-1}y_1^{r_1-1}+z(y_1z)^{r_1-1}(y_2z)^{r_2-1}\ .$$
The proposition is proved. 
\end{proof}

We expect that the relations in Proposition \ref{pr:presentation} are defining.
 
\bigskip

\noindent {\bf Acknowledgments}. This work started when the authors were visiting
IHES in July 2010. We thank Maxim Kontsevich for his kind hospitality and stimulating discussions.

\end{document}